\title{Waiting Time Distribution for the Emergence of Superpatterns}
\author {Anant P.~Godbole and Martha Liendo\\
Department of Mathematics and Statistics\\
East Tennessee State University}
\begin{document}
\def\qed{\vbox{\hrule\hbox{\vrule\kern3pt\vbox{\kern6pt}\kern3pt\vrule}\hrule}}
\def\ms{\medskip}
\def\n{\noindent}
\def\ep{\varepsilon}
\def\G{\Gamma}
\def\lr{\left(}
\def\ls{\left[}
\def\rs{\right]}
\def\lf{\lfloor}
\def\rf{\rfloor}
\def\lg{{\rm lg}}
\def\lc{\left\{}
\def\rc{\right\}}
\def\rr{\right)}
\def\ph{\varphi}
\def\p{\mathbb P}
\def\nk{n \choose k}
\def\a{\cal A}
\def\s{\cal S}
\def\e{\mathbb E}
\def\v{\mathbb V}
\def\l{\lambda}
\newcommand{\bsno}{\bigskip\noindent}
\newcommand{\msno}{\medskip\noindent}
\newcommand{\oM}{M}
\newcommand{\omni}{\omega(k,a)}
\newtheorem{thm}{Theorem}[section]
\newtheorem{con}{Conjecture}[section]
\newtheorem{claim}[thm]{Claim}
\newtheorem{deff}[thm]{Definition}
\newtheorem{lem}[thm]{Lemma}
\newtheorem{cor}[thm]{Corollary}
\newtheorem{rem}[thm]{Remark}
\newtheorem{prp}[thm]{Proposition}
\newtheorem{ex}[thm]{Example}
\newtheorem{eq}[thm]{equation}
\newtheorem{que}{Problem}[section]
\newtheorem{ques}[thm]{Question}
\providecommand{\floor}[1]{\left\lfloor#1\right\rfloor}
\maketitle
\begin{abstract}
Consider a sequence $\{X_n\}_{n=1}^\infty$ of i.i.d.~uniform random variables taking values in the alphabet set $\{1,2,\ldots,d\}$.  A {\it k-superpattern} is a realization of $\{X_n\}_{n=1}^t$ that contains, as an embedded subsequence, each of the non-order-isomorphic subpatterns of length $k$.  We focus on the (non-trivial!) case of $d=k=3$ and study the waiting time distribution of $\tau=\inf\{t\ge7:\{X_n\}_{n=1}^t\ {\rm is\ a\  superpattern}\}$. 
\end{abstract}
\section{Introduction}    A string of integers with values from the set $\{1,2,\ldots,d\}$ (equivalently, a word on the $d$-letter alphabet) is said to contain a pattern if any \emph{order-isomorphic} subsequence of that pattern can be found within that word. For example, the word $5371473$ contains the subsequences $571$, $574$, and $473$, each of which is order-isomorphic to the string $231$. We call the string $231$ the pattern that is contained in the word since it is comprised of the lowest possible ordinal numbers that are order isomorphic to any of these three sequences.
In the literature, the term \emph{pattern} is often reserved for strings of characters in which each character is unique. This traditional definition of pattern is adhered to in this paper, while the term \emph{preferential arrangement} denotes those strings of characters in which repeated characters are allowed, but not necessary. 
The word $5371473$ in the previous example also contains the subsequences $373$ and $343$ which are both order-isomorphic to the string $121$; thus both the string $121$ and the string $231$ are preferential arrangements contained in the parent string.  This order isomorphism on the preferential arrangements is equivalent to a \emph{dense ranking system}, where items that are equal receive the same ranking number, and the next highest item(s) receive the next highest ranking number.   The number of preferential arrangements of length $n$ on $n$ symbols is given by the sequence of ordered Bell numbers, whose first few elements are $1, 3, 13, 75,\ldots$; see, e.g., \cite{sloane}.

The systematic study of pattern containment was first proposed by Herb Wilf in his 1992 address to the SIAM meeting on Discrete Mathematics. However, most results on pattern containment deal more directly with \emph{pattern avoidance}, specifically the enumeration and characterization of strings which avoid a given pattern or set of patterns.  The first results in this area are due to Knuth \cite{knuth}.  For example, if $\pi\in S_n$ is a random permutation (not word) then the probability that it avoids the pattern 123 is given by $\frac{C_n}{n!}$, where $C_n=\frac{{{2n}\choose {n}}}{n+1}$ are the Catalan numbers.  The number of 132, 231, 213, 312, and 321-avoiding permutations are also given by the Catalan numbers, which by Stirling's approximation are asymptotic to $K\cdot\frac{4^n}{n^{3/2}}$ for some constant $K$.  The Stanley-Wilf conjecture, namely that the number of permutations that avoid a fixed $k$-pattern is asymptotic to $C^n$ for some constant $0<C<\infty$, was proved in \cite{marcus}.

Of the few results available on pattern containment, most deal with specified sets of patterns contained in fixed length permutations, i.e. strings without repeated letters; here we cite the work of in \cite{albert}, \cite{barton}, \cite{burstein2}, \cite{eriksson}, \cite{miller}.
Research in this area mainly includes enumerating maximum occurrences of a given set of patterns (``packings"), which may only include one pattern, contained in a permutation of fixed length.   Burstein et al. \cite{burstein} have expanded this research further by not only removing the permutation requirement, thereby allowing for repeated letters in the word that is to contain the set of patterns, but also allowing repeated letters within the patterns themselves.  This work, and the references therein, seem to be closest in spirit to the work undertaken in the present paper.
We are specifically interested in the problem in \cite{burstein} regarding the word length required for a word to contain all preferential arrangements of a given length. We define a \emph{superpattern}, to be a word which contains all preferential arrangements of a given length.   Given $k,d\in{\mathbb Z}^+$, let $n(k,d)$ be the smallest string that contains all preferential arrangements of length $k$ on an alphabet of size $d$.  Since $n(k,d)=n(k,k)$ for $d\ge k$, it suffices to consider the case $d\le k$.  The authors of \cite{burstein} prove the following results:
\begin{lem}  $n(2,2)=3$ and for any $d\ge 3$, $n(d,d)\le d^2-2d+4$.
\end{lem}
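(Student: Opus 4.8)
The plan is to treat the two assertions separately, starting with the small case.

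For $n(2,2)=3$: the preferential arrangements of length $2$ are exactly $11$, $12$ and $21$ (the three possibilities under the dense ranking). I first exhibit a single word of length $3$ containing all three, namely $121$: positions $1,2$ give $12$, positions $2,3$ give $21$, and positions $1,3$ give $11$. For the matching lower bound, observe that a word of length $2$ is itself one of $11,12,21$ and offers only a single pair of positions, so it realizes exactly one of the three arrangements; hence no length-$2$ word is a superpattern and $n(2,2)=3$.

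For the upper bound when $d\ge 3$, the natural starting point is the universal word $U=(1\,2\,\cdots d)(1\,2\,\cdots d)\cdots(1\,2\,\cdots d)$ of $d$ increasing blocks, of length $d^2$. Every preferential arrangement $\pi=\pi_1\cdots\pi_d$ (written in dense-ranking form, so its letters lie in $\{1,\ldots,m\}$ with $m\le d$) embeds \emph{literally}, hence order-isomorphically, by taking $\pi_i$ from the $i$-th block; this already gives $n(d,d)\le d^2$. The content of the lemma is to shave off $2d-4$ further letters, and before trimming I would record the constraints any construction must respect: the all-equal arrangement $\underbrace{1\cdots1}_{d}$ forces some symbol to occur at least $d$ times; the decreasing arrangement $d\,(d-1)\cdots1$ forces $d$ successive increasing runs with the top run supplying the maximal letter; and ``descent-then-jump'' arrangements such as $(d-1)(d-2)\cdots1\,d$ force a large letter to remain available \emph{after} a long decreasing scan. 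These are precisely the cases that make naive block-trimming fail, so the deletions must be chosen with care.

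I would obtain the precise bound by induction, via the recursion $n(d,d)\le n(d-1,d-1)+(2d-3)$ together with the base case $n(3,3)\le 7$. The base case is witnessed by $1231231$, for which one checks directly that all $13$ length-$3$ arrangements (the six permutations, the six one-repeat arrangements, and $111$) occur as order-isomorphic subsequences; in particular the letter $1$ occurs three times, handling $111$. Unwinding gives $n(d,d)\le 7+\sum_{j=4}^{d}(2j-3)=d^2-2d+4$, so it suffices to justify the step. Given a superpattern $W_{d-1}$ over $\{1,\ldots,d-1\}$ for the length-$(d-1)$ arrangements, I would build $W_d$ by appending a short block $A$ of $2d-3$ letters on the enlarged alphabet $\{1,\ldots,d\}$, engineered so that after embedding the dense length-$(d-1)$ reduction $\pi'$ of any $\pi$ into the $W_{d-1}$-part, the final letter $\pi_d$ can always be placed inside $A$ in the correct position relative to the values already used: as a new minimum, a new maximum, a repeat of an existing rank, or strictly between two consecutive ranks.

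The hard part is exactly this insertion step. Order-isomorphic insertion of an intermediate value needs a letter strictly between two already-used values, and a bounded alphabet need not leave such a gap once the $m'\le d-1$ ranks have been realized. The resolution must be structural: the run structure of $W_{d-1}$ has to be exploited so that each required intermediate value survives as a \emph{fresh later copy}, while $A$ (organized, say, as an up-run followed by a down-run) must simultaneously supply all $2m'+1$ possible relative positions for $\pi_d$, uniformly over every arrangement and every one of its embeddings. Proving that $2d-3$ letters genuinely suffice for this simultaneous demand, and re-checking that the extreme arrangements $\underbrace{1\cdots1}_{d}$, $12\cdots d$, $d(d-1)\cdots1$, and $(d-1)\cdots1\,d$ all survive, is where essentially all the work concentrates and is the step I expect to be the main obstacle.
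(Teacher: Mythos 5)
Your treatment of $n(2,2)=3$ is complete and correct: $121$ realizes $12$, $21$, and $11$, and any length-$2$ word realizes exactly one of the three arrangements. Note, however, that the paper does not prove this lemma at all --- it is quoted verbatim from Burstein, H\"ast\"o and Mansour \cite{burstein}, who establish the bound $n(d,d)\le d^2-2d+4$ by exhibiting an explicit word of that length and verifying it is a superpattern; so there is no in-paper argument to compare against, and your attempt must stand on its own.

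It does not. For $d\ge3$ you propose the recursion $n(d,d)\le n(d-1,d-1)+(2d-3)$ with base case $n(3,3)\le7$ (the arithmetic unwinding to $d^2-2d+4$ is right, and $1231231$ is indeed a valid length-$7$ witness --- it appears in the paper's own list of minimum superpatterns), but the inductive step is never carried out: you do not construct the appended block $A$ of $2d-3$ letters, and you do not prove that every length-$d$ preferential arrangement can be embedded as ``reduction into $W_{d-1}$ plus one letter of $A$.'' You correctly identify why this is hard --- the final letter $\pi_d$ may need to sit strictly between two ranks already consumed inside $W_{d-1}$, and an order-isomorphic embedding into $W_{d-1}$ gives you no control over which literal alphabet values were used, so no single block $A$ obviously serves all $2m'+1$ relative positions uniformly over all embeddings --- but identifying the obstacle is not the same as overcoming it. Indeed there is a further unaddressed issue: the dense reduction $\pi'$ of $\pi_1\cdots\pi_{d-1}$ loses the information of where $\pi_d$ falls relative to the suppressed rank gaps, so the inductive hypothesis as stated is too weak, and one would need a strengthened hypothesis (e.g., about embeddings into prescribed increasing runs of $W_{d-1}$) or simply the explicit global construction of \cite{burstein}. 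As written, the second assertion of the lemma is asserted with a plan but not proved.
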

\begin{lem}  For any $k\ge d\ge 3$, $n(k,d)\le(k-2)d+4$.
\end{lem}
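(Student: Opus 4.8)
The plan is to prove the bound by induction on $k$, using the estimate $n(d,d)\le d^2-2d+4$ from Lemma~1.1 as the base case $k=d$ and establishing the one-step recursion $n(k,d)\le n(k-1,d)+d$ for every $k>d$. Telescoping this recursion down to $k=d$ gives $n(k,d)\le (d^2-2d+4)+(k-d)d=(k-2)d+4$, which is exactly the claimed estimate. Equivalently, if $W_0$ is a word of length $n(d,d)$ that is a superpattern for length-$d$ preferential arrangements, I would take the explicit word $W=W_0\,(1\,2\,\cdots\,d)^{\,k-d}$, of length $n(d,d)+(k-d)d=(k-2)d+4$, and show it is a superpattern for length-$k$ arrangements; so the whole content of the lemma is this appending argument.

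To embed an arbitrary preferential arrangement $p=p_1\cdots p_k$ into $W$, I would decompose $p$ into its maximal \emph{strictly increasing runs} $R_1,\dots,R_s$. Each run is strictly increasing, hence fits as an exact subsequence of a single full block $1\,2\,\cdots\,d$. I would assign the last $k-d$ runs to the $k-d$ appended blocks (one run per block), which is legal positionally since the appended blocks occupy the tail of $W$. The remaining $t:=s-(k-d)$ runs form a prefix of $p$, and here the key counting observation is that this prefix has length at most $d$: writing the total ``excess'' as $k-s=\sum_i(|R_i|-1)$, one has $t=d-(k-s)$, so the first $t$ runs have total length $t+(\text{excess among them})\le \big(d-(k-s)\big)+(k-s)=d$. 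Thus the leftover prefix is itself a pattern of length $\le d$, which $W_0$ is built to contain.

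The delicate point, and what I expect to be the main obstacle, is \emph{order-isomorphic consistency across the junction} between $W_0$ and the appended blocks. The appended blocks realize the tail runs using the literal symbols of $p$, whereas $W_0$ only guarantees an order-isomorphic (not value-exact) copy of the leftover prefix, and the two sets of chosen values must interleave in the correct relative order. The cleanest way to force agreement is to embed the prefix into $W_0$ using \emph{exactly} the symbols it carries in $p$, so that the concatenated subsequence literally equals $p$; this upgrades the requirement on $W_0$ from ``order-isomorphic superpattern'' to ``exact-subsequence superpattern'' for the relevant length-$\le d$ words. The crux is therefore to verify that the specific base word of Lemma~1.1 has this stronger property, i.e.\ that its symbols recur frequently enough to leave a value of any prescribed intermediate rank available wherever a later letter must slot in (the ``gap'' phenomenon). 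Since $k>d$ forces many repeated letters in $p$, one can often peel a repeated value and reinsert it in an appended block as an already-used symbol, which is automatically consistent; the residual difficulty is the case of a unique interior value, which I would dispatch by recording, as a strengthened inductive hypothesis, that every embedding can be performed while reserving one rank of the alphabet for the letter about to be appended.
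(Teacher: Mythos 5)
The paper itself offers no proof of this lemma---it is quoted directly from Burstein, H\"ast\"o and Mansour \cite{burstein}---so your proposal has to stand on its own, and it does not: the difficulty you correctly flag at the junction is not a presentational gap but a fatal one, because the word you construct is simply not a superpattern. Take $d=3$, $k=6$, and $W_0=1213121$, the first minimum superpattern on the paper's list. Your word is $W=1213121\,(123)^3$, which has the right length $16=(6-2)\cdot 3+4$, but $W$ fails to contain the preferential arrangement $333221$. Indeed, an order-isomorphic copy requires three equal letters, then two equal strictly smaller letters, then one strictly smaller still; over a three-letter alphabet this forces the literal values $3,3,3,2,2,1$ in that order, the letter $3$ occurs in $W$ only at positions $4,10,13,16$, and after position $13$ the word reads $123$, which contains no subsequence $2,2,1$. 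In your own framework this is exactly the run-decomposition $[3][3][3][2][2][1]$ with $s=6$, $t=3$: the prefix $333$ must be found in $W_0$ using the literal value $3$ (anything smaller collides with the $2$'s placed in the appended blocks), and $1213121$ contains only one $3$.

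Your proposed repair---upgrading $W_0$ to contain every word of length at most $d$ as an \emph{exact} subsequence---cannot be carried out within the length budget: such a word must contain the constant word $x^d$ for each of the $d$ letters $x$, hence each letter occurs at least $d$ times and the word has length at least $d^2>d^2-2d+4$ for $d\ge 3$. Constant prefixes of length $d$ really do arise in your decomposition (as the example above shows already for length $3$), so this is not a removable corner case, and the ``reserve one rank'' heuristic does not touch it, since the obstruction is the multiplicity of a single letter rather than the availability of an intermediate value. There is also the minor unhandled case $s<k-d$ (where your $t$ is negative), though that one is harmless. The actual bound $(k-2)d+4$ in \cite{burstein} comes from a genuinely different construction, essentially $k$ blocks each containing the whole alphabet with the first and last blocks trimmed to two letters each, rather than from grafting $k-d$ increasing blocks onto a minimum superpattern.
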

They also conjecture that for all $d\ge 3$, $n(d,d)=d^2-2d+4$, which they argue is a very hard open problem.

In this paper, we tackle the following random version of the extremal work mentioned in the previous paragraph:  Consider a sequence $\{X_n\}_{n=1}^\infty$ of i.i.d.~uniform random variables taking on values in the alphabet set 

\noindent $\{1,2,\ldots,d\}$.  A {\it k-superpattern} is a realization of $\{X_n\}_{n=1}^t$ that contains, as an embedded subsequence, each of the preferential arrangements of length $k$.  After disposing off the case of $d=k=2$ in Section 2, we focus on the (non-trivial!) case of $d=k=3$ in Section 3, and study the waiting time distribution of $\tau=\inf\{t\ge7:\{X_n\}_{n=1}^t\ {\rm is\ a\  superpattern}\}$.   Here the infimum is taken over $t\ge 7$ in light of Lemmas 3.2 and 3.3 below.  As pointed out in Fu \cite{fu}, such problems are hard even for small $k$; there he studies the number of occurrences of the pattern 123 in a random permutation.  Another probability distribution that is in the spirit of the work undertaken here can be found in \cite{burton}, where the authors study the distribution of the first occurrence of a 3-ascending pattern.  It would be interesting, moreover, to see if the Markov chain embedding method (Fu and Koutras \cite{koutras}, Balakrishnan and Koutras \cite{bala}) can be used to good effect to make further progress in this area.

We end this section with some analogies drawn from \cite{omni}.  If, instead of considering preferential arangements, we ask for the waiting time $W$ until every {\it word} of length $k$ over a $d$-letter alphabet is seen, then the problem becomes both easier, in the sense that $\e(W)$ and $\v(W)$ can be easily computed, but elusive as far as the exact waiting time distribution is concerned.  It is shown in \cite{omni} that 
the distribution of $W$ is the same as that of the waiting time until $k$ disjoint coupon collections from the coupon set $\{1,2,\ldots,d\}$ are obtained. Further analyses and limit theorems are given in that paper.
\section{Binary Alphabet}    Some further classification of superpatterns is necessary for clarity in this paper. Let a \emph{minimal superpattern} be a superpattern in which no two adjacent letters are the same.  A \emph{minimum superpattern} is a minimal superpattern of the shortest length possible, i.e., one  in which every letter is necessary for the containment of all preferential arrangements. 
Let a \emph{strict superpattern} be a superpattern in which the last letter of the superpattern is needed to complete one of the preferential arrangements contained in the superpattern. Clearly, all minimum superpatterns are strict superpatterns, but not conversely.  Specifically, a strict superpattern may contain extraneous repeat letters; e.g., for $k=d=2$, 121 is a minimum superpattern, but 111221 is a strict non-minimum superpattern.

In the binary case, a superpattern is a word that contains all the preferential arrangements, namely $11$, $12$, and $21$.  The waiting time $\tau$ for a binary string to be a superpattern satisfies: $\tau=n$ iff there exist precisely two runs among the first $n-1$ letters of the word and the $n$th letter must be the letter that correctly completes a minimum superpattern.   The number of ways to partition $n-1$ letters into $2$ non-empty parts is $n-2$. Since there are a total of $2$ minimum superpatterns, namely $121$ and $212$, there are $2(n-2)$ words of length $n$ that satisfy the required conditions. Therefore the probability that a word on $n$ letters contains all preferential arrangements for $k=d=2$ is $$\p(\tau=n)=p_{(2,n)} = \frac{2(n-2)}{2^{n}}= \frac{n-2}{2^{n-1}}.$$

It follows that 
\begin{eqnarray} \e(\tau)&=& \sum_{n\geq 3} \frac{n(n-2)}{2^{n-1}}\nonumber\\
&=&\frac{1}{2}\sum_{n\ge 3}\frac{n(n-1)}{2^{n-2}} -\sum_{n\ge3}\frac{n}{2^{n-1}}\nonumber\\
&=&\frac{1}{2}\lr16-2\rr-(4-1-1)\nonumber\\&=&5,\end{eqnarray} 
in contrast to the fact that the waiting time for all words of length 2 to appear as subsequences is the waiting time for two disjoint coupon collections of two ``toys," which equals 3+3=6.
 Similarly, the variance is found to be 
\begin{eqnarray*} V(\tau) &=& \e(\tau^2)-[\e\tau)]^2\nonumber\\
&=& \frac{1}{4}\sum_{n\geq 3} \frac{n(n-1)(n-2)}{2^{n-3}} +5-25\nonumber\\
&=&\frac{1}{4}\cdot96+5-25\nonumber\\
&=&4,\end{eqnarray*} and the (rational) generating function is 
 \begin{eqnarray*} G_2(t) &=& \sum_{n\geq 3}\frac{t^n(n-2)}{2^{n-1}}\\ &=& \frac{t^3}{(2-t)^2}.\end{eqnarray*}

\section{Ternary Alphabet}  The sitation becomes vastly more complicated when $d=k=3$.  By way of comparison, we note that the expected waiting time for a single coupon collection, i.e., until one of each of the three letters of the alphabet is seen, is 1+1.5+3=5.5, so that the expected waiting time till each of the 27 ternary words is seen as a subsequence is $3\cdot5.5=16.5$.  How much less do we expect to have to wait till the string becomes a superpattern that contains each of the 13 preferential arrangements of three-letter words on a ternary alphabet,  namely $111$, $112$, $121$, $211$, $122$, $212$, $221$, $123$, $132$, $213$, $231$, $312$, and $321$, as subsequences?   Throughout the rest of the paper, we will refer to superpatterns in the context of this section as {\it superpatterns for} $[3]^3$, and denote the length of the superpattern by $n=n(3,3)$.  Following the notation of \cite{bona}, let $\pi=\pi_1, \pi_2, \ldots, \pi_k$ be a partition of $[n]$, and $\pi_i$ denotes a block of $\pi$. Then $a=(a_1, a_2, \ldots, a_k)$ is a partition of the integer $n$ where $a_i=\vert \pi_i \vert$ and $a_1 \geq a_2 \geq \cdots \geq a_k$. For example, if $n=7, k=3$, then one such partition of $7$ is $(5,1,1)$, and we will think of this as corresponding to the number of letters of the three types in the superpattern.     It should be noted that for any minimal superpattern no $a_i > \lceil \frac{n}{2} \rceil$, since this would cause adjacent letters to be the same. This fact combined with the following lemma prove very useful in determining the word length of superpatterns for $[3]^3$. 
\begin{lem}
Any superpattern for $[3]^3$ contains a $jk$ and a $kj$ pattern (as a subsequence) both before and after at least one $i$, where $i,j,k \in [3]$ with $i \neq j \neq k$.
\end{lem}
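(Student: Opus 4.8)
The plan is to prove the statement in the (necessary) interpretation that there is a single letter $i$ for which \emph{some} occurrence has both a $jk$ and a $kj$ subsequence to its left, and \emph{some} (possibly different) occurrence has both a $jk$ and a $kj$ subsequence to its right. First I would record the simplification special to a three-letter alphabet: since $\{i,j,k\}=\{1,2,3\}$, a subsequence is order-isomorphic to a given permutation of $\{1,2,3\}$ if and only if it is literally that arrangement of values. Hence a superpattern for $[3]^3$ must contain, as honest subsequences, all six orderings $ijk,ikj,jik,jki,kij,kji$. I would then fix $i$ to be any one letter (the argument is symmetric in the choice, so one may as well take $i=2$) and let $j,k$ denote the other two.

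For the ``before'' half, let $c_{jk}$ and $c_{kj}$ be the least positions at which a $jk$, respectively a $kj$, subsequence is completed in the prefix; both are finite because both orders occur (e.g.\ inside $jki$ and $kji$). Set $\alpha=\max(c_{jk},c_{kj})$, so that by position $\alpha$ the prefix already contains both orders. The crucial move is to use whichever of the two patterns $jki$, $kji$ matches the \emph{later-completing} order: if, say, $\alpha=c_{kj}$, then the subsequence $kji$ has its $kj$ completed at a position $\ge c_{kj}=\alpha$ and its final $i$ strictly later, so this occurrence of $i$ sits at a position $>\alpha$ and therefore has both $jk$ and $kj$ before it; the case $\alpha=c_{jk}$ is identical with $jki$. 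This is the heart of the argument, and I expect it to be the main obstacle, since the naive hope---that one occurrence of $i$ is flanked by both orders on \emph{both} sides simultaneously---is false (one checks this already on the length-$7$ superpattern $2132132$), so the completion-time bookkeeping that lets two different occurrences serve the two sides is essential.

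For the ``after'' half I would invoke the reversal symmetry: reversing a word sends each of the thirteen preferential arrangements to another one on the list (for instance $123\leftrightarrow321$, $132\leftrightarrow231$, $213\leftrightarrow312$, while $121,212,111$ are fixed), so the reverse $w^{R}$ of a superpattern is again a superpattern. Applying the ``before'' result to $w^{R}$ produces an occurrence of $i$ with both orders of $\{j,k\}$ before it in $w^{R}$, which is exactly an occurrence of $i$ with both $jk$ and $kj$ after it in $w$. Combining the two halves gives the desired $i$ (with $j,k$ the remaining two letters), completing the lemma. Finally, with an eye toward Lemmas 3.2 and 3.3, I would note that this structure forces at least three letters before the first such $i$ and three after the second, which is the source of the bound $\tau\ge7$.
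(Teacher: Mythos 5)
Your proof is correct, and its kernel is the same as the paper's: over the alphabet $\{1,2,3\}$ a superpattern must contain all six literal subsequences $ijk,\ldots,kji$, and the presence of $jki$ (resp.\ $kji$, $ijk$, $ikj$) is exactly what supplies a $jk$ before an $i$ (resp.\ $kj$ before, $jk$ after, $kj$ after). The paper runs this as four one-line contradictions --- ``no $jk$ before any $i$ would kill the pattern $jki$'' --- and stops there, so it certifies the four conditions only for four possibly different occurrences of $i$. You go further on two fronts. First, you pin down the interpretation, correctly observing that the strongest literal reading (a single occurrence of $i$ flanked on both sides by both orders) is false; your counterexample $2132132$ is indeed a superpattern (isomorphic to $1231231$), and you instead prove the intermediate version in which one occurrence handles both ``before'' conditions and another handles both ``after'' conditions --- which is what the later lemmas implicitly need when $i$ is a singleton. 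Second, you dispatch the ``after'' half by reversal symmetry rather than by symmetric repetition. The completion-time bookkeeping with $\alpha=\max(c_{jk},c_{kj})$ is more machinery than necessary: once some $i$ has a $jk$ before it and some $i$ has a $kj$ before it, the \emph{last} occurrence of $i$ has both, and dually the first occurrence has both orders after it. One small caution: your closing aside that this structure ``is the source of the bound $\tau\ge 7$'' overreaches, since the before-$i$ and the after-$i$ may be different occurrences; ruling out $n=6$ genuinely requires the partition analysis of Lemma 3.2.
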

\begin{proof}  Let $\sigma$ be a superpattern for $[3]^3$ and let $i,j,k \in [3]$ with $i \neq j \neq k$.
Assume $\sigma$ does not contain a $jk$ pattern before an $i$. Then $\sigma$ does not contain the pattern $jki$ and $\sigma$ is not a superpattern for $[3]^3$. This is a contradiction and therefore $\sigma$ contains a $jk$ pattern before at least one $i$. 
The cases for $\sigma$ containing a $jk$ pattern after an $i$, $kj$ pattern before an $i$, and $kj$ pattern after an $i$ follow in a similar manner.
\hfill\end{proof}

It is clear, since ${5\choose 3}=10<13$, that there are no strict minimal superpatterns for $n=3$, $n=4$, or $n=5$.  Thus the smallest value of $n(3,3)$ is at least 6.
\begin{lem}
There are no strict minimal superpatterns of length $n=6$.\end{lem}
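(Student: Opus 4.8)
The plan is to show that the structural constraints on a length-$6$ minimal superpattern are so rigid that they force the letter-multiplicities into a single configuration, which then collides with the containment requirement supplied by Lemma 3.1. First I would pin down the multiplicity partition $(a_1,a_2,a_3)$ of the $6$ positions. Since $111$ must occur as a subsequence, some letter must appear at least three times, so $a_1\ge 3$; since the permutation patterns $123,132,\ldots,321$ each require three distinct values, every letter of $[3]$ must appear, so $a_3\ge 1$; and minimality forces $a_i\le\lceil 6/2\rceil=3$. The only partition of $6$ meeting all three conditions is $(3,2,1)$. Hence exactly one letter, which I will call $i$, occurs a single time, while the other two letters $j$ and $k$ occur three and two times (in some order), accounting for the remaining five positions.

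Next I would pivot on the unique letter $i$ and apply Lemma 3.1. Because $i$ occurs only once, its single occurrence is the only $i$ available to witness each of the four assertions of that lemma; hence the sub-word lying strictly before the $i$ must contain both $jk$ and $kj$ as subsequences, and likewise the sub-word lying strictly after the $i$ must contain both $jk$ and $kj$. Equivalently, one can bypass the lemma and read these requirements directly off the six permutation patterns: those with $i$ in last position force $jk$ and $kj$ before the $i$, while those with $i$ in first position force $jk$ and $kj$ after it.

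The contradiction is then a counting matter. Every letter occurring before, or after, the single $i$ is a $j$ or a $k$, and a word over the two-letter alphabet $\{j,k\}$ can contain both $jk$ and $kj$ as subsequences only if its length is at least three (no string of length at most two over two symbols realizes both orders). Thus at least three of the $j,k$-letters must lie before the $i$ and at least three must lie after it, forcing at least six letters from $\{j,k\}$; but $a_j+a_k=6-a_i=5$, a contradiction. Hence no minimal superpattern of length $6$ exists at all, and since every strict minimal superpattern is in particular minimal, the claim follows a fortiori. I expect the only genuine subtlety to be the opening step: correctly forcing the partition to be $(3,2,1)$, and in particular observing that the crude bound ${6\choose 3}=20\ge 13$ leaves length $6$ open, so that a truly structural obstruction is needed rather than a mere subsequence count. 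Once the pivot letter $i$ is identified, the ``three-before, three-after, but only five available'' tally closes the argument immediately.
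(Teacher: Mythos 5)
Your proof is correct and follows essentially the same route as the paper: the multiplicity bound $a_i\le\lceil 6/2\rceil$ from minimality, the need for some letter to appear three times (for the $111$ pattern), and the squeeze on the singleton letter's position via Lemma 3.1 are exactly the paper's three ingredients, merely reorganized so that the partition $(3,2,1)$ is singled out up front instead of by checking all three partitions of $6$. Your ``three before, three after, only five available'' count is just the counting form of the paper's $r\ge 4$ and $r\le 3$ contradiction.
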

\begin{proof}The integer $6$ can be partitioned into $3$ parts in three ways, namely $(4,1,1)$, $(3,2,1)$, and $(2,2,2)$. 

Consider a strict minimal superpattern with $(a_1,a_2,a_3)=(4,1,1)$. Then there exists an $a_i> \lceil \frac{n}{2} \rceil=3$, causing two adjacent letters to be the same letter, which contradicts the fact that $\sigma$ is a strict minimal superpattern. 
Next, consider a strict minimal superpattern with $(a_1,a_2,a_3)=(3,2,1)$, so that $a_3=1$. Let $i$, the singleton letter, be the $r$th letter of the six letter string. Then $r\geq 4$ since there exists both a $jk$ and a $kj$ pattern before $i$ and $r\leq 3$ since there exists both a $jk$ and a $kj$ pattern after $i$. Thus no such $r$ exists and therefore there is no strict minimal superpattern with 3, 2, and 1 letters of the three types.
Finally, consider a strict minimal superpattern with $(a_1,a_2,a_3)=(2,2,2)$. Then there does not exist an $a_i\geq 3$ and thus no $111$ pattern exists, which contradicts the fact that we have a strict minimal superpattern. \hfill\end{proof}
\begin{lem}
There exist seven strict minimal superpatterns of length $n=7$ up to isomorphism.
\end{lem}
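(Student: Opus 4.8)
The plan is to run a finite case analysis organized by the partition $(a_1,a_2,a_3)$ of $7$ recording the multiplicities of the three letters, after two reductions that apply throughout. First, relabeling the alphabet by a permutation in $S_3$ permutes the thirteen preferential arrangements among themselves and fixes positions, hence carries minimal superpatterns to minimal superpatterns and preserves strictness; so it suffices to count orbits under this $S_3$-action. Second, strictness is automatic at length $7$: a word of length $m$ has at most $\binom{m}{3}$ distinct length-$3$ subpatterns, so $\binom{5}{3}=10<13$ forbids any superpattern of length $\le 5$; consequently a minimal superpattern of length $6$ would have to be strict (its length-$5$ prefix cannot be a superpattern), and Lemma 3.2 says none exist. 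Therefore the length-$6$ prefix of a length-$7$ minimal superpattern, itself a minimal word, is never a superpattern, so its last letter is always essential. The task thus reduces to counting, up to relabeling, the minimal superpatterns of length $7$. Since each $a_i\le\lceil 7/2\rceil=4$ and all three letters must occur (to realize $123$), the only admissible partitions are $(4,2,1)$, $(3,3,1)$, and $(3,2,2)$.

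I would first dispose of the two partitions containing a singleton, where Lemma 3.1 is decisive; recall that a superpattern must contain all six permutation patterns $123,132,213,231,312,321$, the pattern $111$, and the six repeated-letter patterns. For $(4,2,1)$ the letter $q$ of multiplicity $4$ can only occupy $\{1,3,5,7\}$, so the word reads $q\,w_2\,q\,w_4\,q\,w_6\,q$ with $w_2,w_4,w_6$ the two copies of the doubled letter and the singleton; since Lemma 3.1 demands both orders of the remaining pair before and after the singleton, and only one letter precedes position $2$ (resp.\ follows position $6$), the singleton is forced to the central slot, leaving one shape and hence one class, represented by $1213121$. For $(3,3,1)$, applying Lemma 3.1 to the singleton $c$ forces it to position $4$; each length-$3$ half must then contain both $ab$ and $ba$, so equals $aba$ or $bab$, and the multiplicities $(3,3)$ force one half of each kind. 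The two resulting words are exchanged by the relabeling $a\leftrightarrow b$, giving a single class, represented by $1213212$.

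The heart of the argument, and the \emph{main obstacle}, is $(3,2,2)$, where there is no singleton for Lemma 3.1 to locate. Here I would index the search by the position set of the tripled letter $t$: its three pairwise-non-adjacent copies occupy one of $\binom{5}{3}=10$ triples of positions in $[7]$, which the reversal $i\mapsto 8-i$ collapses into a handful of cases. For each position set I would fill the remaining four slots with the two doubled letters subject to the no-adjacent-repeat condition, take $t=1$ without loss of generality, and test each candidate against all thirteen patterns (relabeling then recovers $t=2,3$). The triples $\{1,3,5\}$, $\{1,3,6\}$, $\{1,3,7\}$ and their reverses support no superpattern, since the tripled letter clusters too far toward one end to realize all six permutations, whereas $\{1,4,7\}$, $\{2,4,6\}$, $\{1,4,6\}$, and $\{2,4,7\}$ produce exactly five relabeling-orbits, with representatives $1231231$, $1232123$, $1231213$, $1231321$, and $1232132$. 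Together with the two singleton cases this yields $5+1+1=7$ classes.

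The delicate points are completeness and bookkeeping. Completeness demands verifying that no admissible placement of $t$ or filling of the remaining slots is overlooked, and that each discarded candidate genuinely fails a specific pattern (in practice one of the six permutations, which is what the lopsided placements destroy). The bookkeeping subtlety is that the count is taken up to relabeling of the alphabet only, and one must \emph{not} additionally identify a word with its reversal: reversal also sends minimal superpatterns to minimal superpatterns, but it is not part of the operative notion of order-isomorphism, and were it used it would merge the reverse-related orbits of $1231213$ and $1232132$ and collapse the count to six. Finally one checks that the five $(3,2,2)$ orbits are pairwise disjoint, which is routine once representatives and their $S_3$-orbits are written out explicitly.
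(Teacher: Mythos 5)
Your proposal is correct and follows essentially the same route as the paper: a case analysis over the multiplicity partitions $(5,1,1)$, $(4,2,1)$, $(3,3,1)$, $(3,2,2)$ of $7$, using Lemma 3.1 to force the singleton to position $4$ in the two cases that have one, and a finite search over the placements of the tripled letter in the $(3,2,2)$ case. The only difference is organizational --- you index the $(3,2,2)$ search by the position triple of the tripled letter (reduced by reversal symmetry) where the paper indexes by its middle position $s\in\{3,4,5\}$ --- and your representatives and the final count $1+1+5=7$ agree with the paper's.
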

\begin{proof}The integer $7$ can be partitioned into $3$ parts in four ways, namely $(5,1,1)$, $(4,2,1)$, $(3,3,1)$, and $(3,2,2)$.

Case 1: Consider a strict minimal superpattern corresponding to a $(5,1,1)$  partition. Then there exists an $a_i> \lceil \frac{n}{2} \rceil=4$, causing two adjacent letters to be the same, which contradicts the strict minimality of the  superpattern. This case is thus vacuous.

Case 2: Consider a strict minimal superpattern with partition structure $(4,2,1)$ with $a_i=1$, $a_j=4$, and $a_k=2$. Let the $r$th letter of the string equal $i$ for some $r \in [7]$. Then $r\geq 4$ since there exists both a $jk$ and a $kj$ pattern before $i$ and $r\leq 4$ since there exists both a $jk$ and a $kj$ pattern after $i$. Therefore $r=4$. Since there are four instances of the letter $j$, and two of the letter $k$, we see that the first three letters of the string must correspond to the last three letters of the string.  Therefore (up to isomorphism) there exists one such strict minimal superpattern having 4, 2, and 1 occurrences of the three letters.  Denote this superpattern by 1213121.

Case 3: Consider a strict minimal superpattern with 3, 3, and 1 occurrences of the letters.  
Set $a_i = 1, a_j = 3, a_k = 3$. Let the $r$th letter of the string be the singleton $i$.  Then $r\ge4$ since there
exists both a $jk$ and a $kj$ pattern before $i$, and  $r\le4$ since there exists both a $jk$ and
a $kj$ pattern after $i$. Thus $r=4$. Since there are 3 instances of each of the letters $j$ and $k$, we see that the first and last three letters of the string must be comprised of $jkj$ and $kjk$ respectively.  Up to isomorphism, therefore, exists just one such strict minimal superpattern with partition structure $(3, 3, 1)$; we denote it by 1213212.

Case 4:  The case with partition structure $(3,2,2)$ is the most complicated case with five non-isomorphic solutions.  Consider a strict minimal superpattern 
with $a_i = 3, a_j = 2, a_k = 2$.  We focus on the most frequent letter. Let the $r$th, $s$th and $t$th letters be of the string be $i$ for some 
$i$, with $1\le r< s< t\le 7$. Since no two adjacent letters are the same letter, $3\le s\le 5$.

If $s = 3$, then $r = 1$ and $t = 5, 6,$ or $7$, since no two adjacent letters are the
same letter.  If $t = 5$, then there does not exist both a $jk$ and a $kj$ pattern before at
least one $i$, which contradicts Lemma 3.1. Therefore $t \ne 5$.  If $t = 6$, then we find that Lemma 3.1 is violated no matter in which of the six possible ways the two 2's and two 3's are arranged.  Thus $t\ne 6$.  If $t = 7$, then once again we see
and there does not exist a configuration of the other four letters for which Lemma 3.1 is satisfied.  Thus $t\ne 7$.

If $s = 4$, then $r = 1$ or 2 and $t = 6$ or 7 since no two adjacent letters are
the same. If $r=1, t=6$, the only feasible pattern is $ijkijik$.  If $r=1, t=7$, there are two solutions, namely $ijkijki$ and $ijkikji$.  If $r=2, t=6$, the single solution is $jikijik$, and, finally, if $r=2, t=7$, the single solution is $jikijki$.

It can be shown that no additional solutions exist for $s=5$.  This completes the proof.
\hfill\end{proof}
\begin{cor}
The length of a minimum superpattern for $[3]^3$ is $n(3,3)=7$.
\end{cor}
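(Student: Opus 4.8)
The plan is to bracket $n(3,3)$ from both sides, reading the upper bound off Lemma 3.3 and assembling the lower bound from the counting remark together with Lemmas 3.1 and 3.2. For the upper bound I would simply point to the length-$7$ word $1213121$ produced in Case 2 of Lemma 3.3 (any of the seven strict minimal superpatterns there will serve) and note that, being a superpattern, it witnesses $n(3,3)\le 7$. The content is therefore entirely in the matching lower bound.

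I would first argue the statement in the form directly attached to the definition, namely that a \emph{minimum} superpattern has length $7$; since a minimum superpattern is minimal by definition, this only requires controlling minimal superpatterns. The key reduction is that a shortest minimal superpattern is automatically \emph{strict}: if its last letter completed no preferential arrangement, deleting that letter would preserve both the superpattern property and the no-adjacent-repeats property, producing a strictly shorter minimal superpattern and contradicting minimal length. Hence a minimum superpattern is a strict minimal superpattern, and I may invoke the earlier results: the subsequence count ${5 \choose 3}=10<13$ excludes lengths $n\le 5$, and Lemma 3.2 excludes $n=6$, while Lemma 3.3 realizes $n=7$. This already gives that the length of a minimum superpattern is $7$, and in fact that the seven words of Lemma 3.3 are exactly the minimum superpatterns for $[3]^3$.

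To complete the identification with $n(3,3)$ as originally defined — the shortest length of \emph{any} superpattern, minimal or not — I must still rule out a non-minimal superpattern of length $6$, and this is the step I expect to be the main obstacle. The tempting move, deleting one of an adjacent equal pair $\sigma_i=\sigma_{i+1}$ to recover minimality and then applying Lemma 3.2, is \emph{not} unconditionally valid: an occurrence of a pattern with a repeated value — most dangerously $111$, but also $112$, $211$, $122$, $221$ — may use both positions $i$ and $i+1$, and if the repeated letter occurs nowhere else the deletion destroys that pattern. The clean way around this is to bypass the reduction and argue through Lemma 3.1 directly: any superpattern must contain all six genuine permutations $123,132,213,231,312,321$ as subsequences, so it suffices to show that the shortest word over $\{1,2,3\}$ having all six as subsequences has length $7$. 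I would prove this by a case analysis on the occurrences of the ``middle'' letter, paralleling Case 4 of Lemma 3.3 but without assuming no adjacent repeats: in the representative case where the middle letter occurs once, say at position $q$, realizing $312$ and $132$ before $q$ and $213$ and $231$ after $q$ forces two copies of an extreme letter on each side of $q$, hence four copies of one letter and total length at least $7$; the remaining multiplicity distributions are handled the same way. Either way no superpattern of length $\le 6$ exists, which together with the length-$7$ example of Lemma 3.3 yields $n(3,3)=7$.
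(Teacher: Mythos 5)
Your proposal is correct, and its skeleton matches what the paper intends: the upper bound comes from the explicit length-$7$ words of Lemma 3.3, and the lower bound from the counting remark ${5\choose 3}=10<13$ together with Lemma 3.2. (The paper in fact offers no proof of the corollary, treating it as immediate from those lemmas.) What you do differently --- and it is a genuine improvement --- is to notice that the counting remark and Lemma 3.2 only exclude \emph{strict minimal} superpatterns of length at most $6$, whereas $n(3,3)$ is defined as the shortest length of \emph{any} superpattern; one must still exclude length-$6$ superpatterns with adjacent repeated letters, and you are right that the naive ``delete one of an adjacent equal pair'' reduction can destroy an occurrence of $111$ (or $112$, etc.) that uses both repeated positions. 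Your patch --- showing directly that any word over $\{1,2,3\}$ containing all six permutations as subsequences has length at least $7$, by splitting at the occurrences of the middle letter and forcing at least three letters on each side --- is sound, though you only sketch the case analysis for the remaining multiplicity distributions. A cheaper patch, closer to the paper's own machinery, is available: a non-strict length-$6$ superpattern reduces to a length-$5$ superpattern (impossible by counting), and for strict ones the partition analysis of Lemma 3.2 survives without the minimality hypothesis --- the $(2,2,2)$ case fails for lack of $111$, the $(3,2,1)$ case fails by Lemma 3.1 (which holds for arbitrary superpatterns, giving $r\ge 4$ and $r\le 3$ for the position of the singleton), and the $(4,1,1)$ case, where the paper invokes adjacency of repeated letters, is instead killed by noting that a single $j$ and a single $k$ cannot realize both a $jk$ and a $kj$ pattern. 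Either way the corollary stands; your version is more careful than the paper's.
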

Burstein et al. (\cite{burstein}) give a constructive proof for $n(l, l) \le l^2 - 2l + 4$ and
conjecture that $n(l, l) = l^2 - 2l + 4$. The corollary above characterizes the solutions for the case $l=3$.  
The seven unique strict minimal superpatterns of length $n=7$, up to isomorphism, 
are 1213121, 1213212, 1231213, 1231231, 1231321, 1232123, and 1232132. Since there are 3! ways to permute the letters isomorphically in each
strict minimal superpattern of length $n = 7$, we obtain a total of 3!(7) = 42 strict
minimal superpatterns of length $n = 7$.  {\it These are also the minimum superpatterns}.

Next, we consider the total number of minimal
superpatterns, up to isomorphism, for any any length $n\ge 8$. Since all minimal superpatterns are
comprised of an alternating pattern, then, up to isomorphism, the first two letters
can be fixed as $i$ and $j$ for $i, j \in [3]$ with $i \ne j$. There exist $2^{n-2}$ total words on
the remaining $n - 2$ positions that have alternating patterns since each letter can be
chosen in two ways. However, not all of these $2^{n-2}$ words will result
in a $[3]^3$-superpattern of  length $n$. The following lemma aids in determining the
number of candidate $n$-strings which fail to create a superpattern of $[3]^3$; this number, up to isomorphism, ends up being $(n-2)^2$.
\begin{lem}
Any strict minimal $n$-superpattern for $[3]^3$ contains
a minimum superpattern for $[3]^3$ with the last letter of the minimum superpattern
occurring on the last letter of the superpattern.
\end{lem}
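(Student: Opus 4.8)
The plan is to split the statement into two nearly independent pieces: (i) every superpattern for $[3]^3$ contains a minimum superpattern (an alternating word of length $7$) as an embedded subsequence, and (ii) strictness forces any such embedded copy to end on the last letter. Piece (ii) is the quick one, and I would dispatch it first as motivation. Suppose $\mu$ is a minimum superpattern occurring in $\sigma=\sigma_1\cdots\sigma_n$ at positions $i_1<\cdots<i_7$. If $i_7\le n-1$, then all seven positions lie in $\{1,\dots,n-1\}$, so the prefix $\sigma_1\cdots\sigma_{n-1}$ already contains $\mu$ and is therefore a superpattern; this contradicts the assumption that $\sigma$ is strict, i.e.\ that $\sigma_n$ is needed to complete some preferential arrangement. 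Hence $i_7=n$, and since $n$ is the largest available index it is automatically the final letter of $\mu$ --- which is exactly what the lemma asserts.

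All of the work is therefore in piece (i). My approach would be to pass to a shortest embedded superpattern: among all subsequences of $\sigma$ that are themselves superpatterns, choose one, $\mu$, of minimum length $\ell$. By construction $\mu$ is \emph{irreducible}, in the sense that deleting any single one of its letters destroys the superpattern property. The Corollary gives $\ell\ge n(3,3)=7$ immediately, so the crux is to prove the matching bound $\ell=7$ and that $\mu$ is alternating, i.e.\ that $\mu$ is genuinely one of the $42$ minimum superpatterns rather than merely an irreducible longer word.

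The main obstacle is precisely this upper bound: I must rule out ``locally minimal'' (irreducible) superpatterns of length $8$ or more. I would attack it by re-running the machinery already developed in the section for general length. Lemma 3.1 supplies, for each choice of $i,j,k$, both a $jk$ and a $kj$ before and after some copy of $i$, and the multiplicity constraint $a_m\le\lceil n/2\rceil$ governs how often each letter can occur; combining these exactly as in the partition/parity case analyses of Lemmas 3.2 and 3.3 (now carried out for an arbitrary length rather than for $n=6,7$) should show that once $\ell\ge 8$ some letter is redundant and can be excised without losing any of the thirteen arrangements, contradicting irreducibility. The same bookkeeping should simultaneously force $\mu$ to be alternating, since a non-alternating word of the minimum length wastes a letter. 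Intuitively, beyond length $7$ Lemma 3.1 leaves genuine slack, and the surplus occurrence of the most frequent letter is the one to remove.

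As an alternative organization I would consider an induction on $n$ that peels the \emph{first} letter: deleting $\sigma_1$ leaves an alternating word whose last letter is still essential, because $\sigma_2\cdots\sigma_{n-1}\subseteq\sigma_1\cdots\sigma_{n-1}$ is not a superpattern; so whenever $\sigma_2\cdots\sigma_n$ remains a superpattern the inductive hypothesis applies verbatim and pushes the embedded minimum superpattern out to position $n$. The only troublesome case is when $\sigma_1$ is itself essential, and showing that this cannot occur for $n\ge 8$ is the same phenomenon as the bound $\ell\le 7$ above. Either route thus passes through the identical technical heart: establishing that a superpattern for $[3]^3$ which is longer than the minimum always carries a removable letter.
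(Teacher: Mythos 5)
Your decomposition is sound, and your piece (ii) is correct and essentially how the paper also exploits strictness: if a minimum superpattern embedded in $\sigma$ avoided position $n$, the prefix $\sigma_1\cdots\sigma_{n-1}$ would already be a superpattern, contradicting the definition of strict. But piece (i) --- that a strict minimal superpattern for $[3]^3$ actually contains one of the $42$ minimum superpatterns as a subsequence --- is the entire content of the lemma, and your treatment of it is a plan rather than a proof. You reduce it to the claim that every irreducible (shortest embedded) superpattern has length exactly $7$ and is alternating, and then assert that ``re-running the machinery'' of Lemmas 3.1--3.3 ``should'' establish this. Three concrete obstacles stand in the way. First, the multiplicity bound $a_m\le\lceil \ell/2\rceil$ that drives Lemmas 3.2 and 3.3 is valid only for words with no two adjacent equal letters, and a shortest embedded subsequence of $\sigma$ need not be alternating even when $\sigma$ is, so that tool does not apply to your $\mu$ without further argument. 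Second, ruling out irreducible superpatterns of length $\ell$ for \emph{every} $\ell\ge 8$ is an infinite family of claims, not a finite partition check like the $n=6$ and $n=7$ cases; you would need a uniform redundancy argument, which is precisely what is missing. Third, and most tellingly, the analogous statement is \emph{false} for $[4]^4$: the paper's Section 4 exhibits the strict superpattern $121312141213121$ of length $15$ containing no minimum superpattern of length $12$. So no generic ``beyond minimum length there is slack'' heuristic can succeed; any proof must use structure specific to $k=3$.

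The paper's own proof takes a different and fully explicit route: normalizing $\sigma(n)=i$, $\sigma(n-1)=k$, it first uses strictness to force the first occurrence $b_1$ of the letter $i$ to satisfy $b_1\le 3$ (otherwise a length-$7$ superpattern already sits inside $\sigma_1\cdots\sigma_{n-1}$), and then in each of the finitely many cases $b_1\in\{1,2,3\}$ (with sub-cases on $\sigma(1),\sigma(2),\sigma(3)$) it uses Lemma 3.1 to exhibit, letter by letter, a specific minimum superpattern such as $jkikjki$ or $ijkijki$ ending at position $n$. Your alternative, if completed, would prove something stronger and cleaner, but as written the technical heart --- showing the surplus letter is always removable for $[3]^3$ --- is not there.
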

\begin{proof}
Consider, up to isomorphism,  a strict minimal superpattern $\sigma$ of length $n$ 
for $[3]^3$. Let $\{i, j, k\} = [3]$. Without loss of
generality, let $\sigma(n) = i$ and $\sigma(n - 1) = k$. Then there exists some $\sigma(b_1) = i$ as the first occurrence of $i$ in $\sigma$, and, without
loss of generality, there exists $(\sigma(c_1), \sigma(c_2)) = (k,j)$ with $\sigma(c_1) = k$ as the first occurrence
of $k$ in $\sigma$, and $\sigma(c_2) = j$ as the last occurrence
of $j$ in $\sigma$ where $b_1  < c_2 < n-1$ since there exists both a $jk$ and a $kj$ pattern after
at least one $i$. If $b_1 > 3$ then there exists a $jk$ and a $kj$ pattern before it, causing $\sigma$ to
contain either a $jkjikjk$ or a $kjkikjk$ pattern, both of which are strict superpatterns
of length $n = 7$ and therefore $\sigma(n) = i$ is unnecessary for the containment of all
preferential arrangements. This contradicts the given fact that $\sigma$ is a strict minimal
superpattern. Therefore $b_1 \le 3$.

Case 1: If $b_1 = 3$, then $(\sigma(1), \sigma(2)) = jk$ or $kj$. If $(\sigma(1), \sigma(2)) = jk$, then $\sigma$ contains the minimum superpattern $jkikjki$ with the last letter of the minimum superpattern occurring on the last letter of $\sigma$. If $(\sigma(1), \sigma(2)) = kj$, then $
\sigma$  contains the minimum superpattern $kjikjki$ with the last letter of the minimum superpattern occurring on the last letter of $\sigma$.

Case 2: If $b_1 =2$,then $\sigma(1) = j$ or $k$.   If $\sigma(1)=j,$ then there exists the pattern $ki$ before $\sigma(c_2) = j$ since there exists a $ki$ pattern before at least one $j$ and thus it must also exist before the last $j$. Then $\sigma$ contains the minimum superpattern $jikijki$ 
with the last letter of the minimum superpattern occurring on the last letter of $\sigma$. If $\sigma(1)=k$ (here $c_1=1$), then there exists a $ji$ pattern before $\sigma(n-1) = k$ since there exists a $ji$ pattern before at least one $k$ and $\sigma(n-1)$ is the last occurrence of $k$. Since no two adjacent letters are the same letter, $\sigma(3)=j$ or $k$. If $\sigma(3)=j$, then (noting that there must be a $k$ between the third spot and the $c_2$th) $\sigma$ contains either a $kijikji$ on the first $n$ letters, or a $kijkijk$ or  $kijkjik$ pattern on the first $n-1$ letters.  In the first case, $
\sigma$  contains a minimum superpattern $kjikjki$ with the last letter of the minimum superpattern occurring on the last letter of $\sigma$.  In the second and third case, we find embedded minimum superpatterns on $n-1$ letters, and therefore $\sigma(n)=i$  is unnecessary for the the containment of all preferential arrangements. This contradicts the given fact that $\sigma$ is a strict minimal superpattern. If $\sigma(3)=k$, then $\sigma$ contains the minimum superpattern $kikjiki$ with the last letter of the minimum superpattern occurring on the last letter of $\sigma$. This is because there must be an $ik$ and a $ki$ after some $j$.

Case 3: If $b_1 = 1$, then $\sigma(2) = j$ or $k$. If $\sigma(2) = j$, then there exists a $ki$ pattern before $\sigma(c_2) = j$. Therefore $\sigma$ contains the minimum superpattern $ijkijki$ with the last letter of the minimum superpattern occurring on the last letter of the string. If $\sigma(2) = k$, then $\sigma(3) = i$ or $j$. If $\sigma(3) = i$, note that there exists a $ji$ pattern before $\sigma(n-1) = k$.  Thus $\sigma$ contains the minimum superpattern $ikijiki$ with the last letter of the minimum superpattern occurring on the last letter of the string. If $\sigma(3) = j$, note that there exists a $ki$ pattern (where $\sigma(2) = k$ is the $k$ of the pattern) before $\sigma(c_2) = j$ since there exists a $ki$ pattern before at least one $j$, and $\sigma(c_2)$ is the last occurrence of $j$. Thus the string contains the minimum superpattern $ ikjijki$ with the last letter of the minimum superpattern occurring on the last letter of $\sigma$.

Since any $i, j, k \in [3]$ can be permuted by isomorphisms, all strict minimal $n$-superpatterns for $[3]^3$; $n\ge8$, contain a minimum superpattern with the last letter of the minimum superpattern occurring on the last letter of the string.  This completes the proof.
\hfill\end{proof}

It now follows that the strict minimal strings that fail to create a superpattern of $[3]^3$ do not contain a complete embedding of one of the strict minimal superpatterns of length seven (again, for $n=7$ these are the same as the minimum superpatterns), since by Lemma 3.5 all strict minimal superpatterns contain a strict minimal superpattern of length seven. All the words contain some portion of a strict minimal superpattern of length seven up to isomorphism, since the first two letters are fixed as $i$ and $j$ and each strict minimal superpattern of length seven can be written in the same manner. Let an ``$i$-fold progression" count the number of the $2^{n-2}$ words which begin with $ij$ and contain the first through the $i$th letters of a unique strict minimal superpattern of length seven, but not the $i+1$st letter. Then 2-fold progression is guaranteed by the fixed $i$ and $j$ occurring on the first and second positions of each word. The third position must be an $i$ or a $k$ since no two adjacent letters are the same letter. Let the strict minimal superpatterns of length seven with the first three positions containing the pattern $iji$ be called type A patterns, with the strict minimal superpatterns of length seven with the first three positions containing the pattern $ijk$ being called type B patterns.

 First, consider the strict minimal superpatterns of type A, namely $ijikiji$ and $ijikjij$, where $i,j,k \in [3]$ with $i \neq j \neq k$. A word that satisfies $3$-fold progression contains the pattern $iji$ on the first three positions, but no $k$ afterwards. There is one such word, namely $ijijij \ldots$, which satisfies a $3$-fold progression. 

For a $4$-fold progression to occur, the word must contain the pattern $iji$ on the first three positions followed by a $k$ which has no $i$ or $j$ after it, otherwise a $5$-fold progression will occur. There is only one such word, namely $ijijij \ldots k$, where the only occurrence of $k$ is at the end of the word. 

There are $2(n-4)$ Type A words that exhibit a 5-fold progression, namely any word which follows the pattern $ijijij \ldots kikiki \ldots $ and $ijijij \ldots kjkjkj \ldots$, where the $k$ can be inserted in any position other than the first, second, third, or $n$th. 

In order for a word to contain a $6$-fold progression, it must contain the $5$-fold progression $ijijij \ldots kikiki \ldots$ followed by a $j$ or $ijijij \ldots kjkjkj \ldots$ pattern followed by an $i$.
 This corresponds to all the ways in which two non-consecutive choices can be made from $n-3$ spots for the $k$ and the sixth letter of the progression, so there are $2 {n-4 \choose 2}$ such words, namely $ijijij \ldots kikiki \ldots jkjkjk \ldots$ 

\noindent and $ijijij \ldots kjkjkj \ldots ikikik \ldots$. 

Therefore the total count for the number                                                                                                           
of words which do not contain a complete embedding of one of the type A strict minimal superpatterns of length seven is \begin{eqnarray*} \beta_A(n) &=& 1 + 1 + 2(n-4) + 2{n-4 \choose 2} \\ &=& n^2 - 7n + 14. \end{eqnarray*}

 Next, consider the strict minimal superpatterns of type B, namely $ijkijki$, $ijkikji$, $ijkijik$, $ijkjijk$ and $ijkjikj$, where $i,j,k \in [3]$ with $i \neq j \neq k$. There exist no words that satisfy a $3$-fold progression since all words containing the pattern $ijk$ on the first three positions contain either an $i$ or a $j$ immediately afterwards and there exists either the pattern $ijki$ or the pattern $ijkj$ on at least one of the strict minimal superpatterns of type B, causing at least a $4$-fold progression to occur. 

For a $4$-fold progression to occur, the word must contain either the pattern $ijki$ on the first four positions with no $j$ or $k$ afterwards, which is impossible, or the pattern $ijkj$ on the first four positions with no $i$ afterwards, otherwise a $5$-fold progression will occur. There is only one such word, namely $ijkjkjk \ldots$.

 For a $5$-fold progression to occur using the pattern $ijki$ as a basis pattern on the first four positions, the word must contain either the pattern $ijkij$ on the first five position with no $i$ or $k$ afterwards, which is impossible, or the pattern $ijkik$ on the first five positions with no $j$ afterwards, otherwise a $6$ fold progression will occur. There is only one such word, namely $ijkikiki \ldots$. 
For a $5$-fold progression to occur using the pattern $ijkj$ as a basis pattern on the first four positions, there is only one possibility, namely $ijkjkjk \ldots i$, where the only occurrence of $i$ after position four is at the end of the word.  Since any other occurrence of $i$ on the $(n-5)$ remaining positions (other than the last position) results in a $6$-fold progression, there are $n-5$ ways for the word to contain a $6$-fold progression for each possible letter that can follow $i$ using the pattern $ijkj$ as a basis pattern on the first four positions. A $6$-fold progression is contained in the word if the pattern $ijkjij$ is not followed by a $k$ or the pattern $ijkjik$ is not followed by a $j$. 
There are $2(n-5)$ such words.  A word can also contain a $6$-fold progression using the pattern $ijkij$ as a basis pattern on the first five positions if the word contains either the pattern $ijkiji$ on the first six positions with no $k$ afterwards or the pattern $ijkijk$ on the first six positions with no $i$ afterwards.
 There exists only one such word for each of these cases, namely $ijkijijij \ldots$ and $ijkijkjkjk \ldots$. Lastly, a word can also contain a $6$-fold progression if it contains the pattern $ijkik$ on the first five positions followed by a $j$ on one of the $n-5$ remaining positions that is not followed by an $i$. 
There are $n-5$ such words, namely any word that follows the pattern $ijkikiki \ldots jkjkjk \ldots$. Therefore the total count for the number of words which do not contain a complete embedding of one of the type B strict minimal superpatterns of length seven is \begin{eqnarray*} \beta_B(n) &=& 1 + 1 + 1 + 2(n-5) + 1 + 1 + (n-5) \\ &=& 3n - 10. \end{eqnarray*}

 Therefore the total number of words that do not contain a complete embedding of one of the strict minimal superpatterns of length seven and thus fail to create a superpattern of $[3]^3$ is \begin{eqnarray*} \beta_{total}(n) &=& n^2 - 7n + 14 + 3n - 10 \\ &=& (n-2)^2, \end{eqnarray*}
making the total number of minimal superpatterns of any length $n\ge 7$, up to isomorphism, equal to \begin{eqnarray*} \Gamma_{total}(n) &=& 2^{n-2} - (n-2)^2. \end{eqnarray*} The sequence generated by $\Gamma_{total}(n)$ existed previously in \cite{sloane} as entry number $A024012$, but with little context.  We have now added the ``superpattern origin" of the sequence to that OEIS entry.
\begin{lem}
For all $n \geq 7$, the total number $S_\mu(n)$ of strict minimal superpattern of length $n$ is given by $S_\mu(n)=(n-4)^2 - 2.$
\end{lem}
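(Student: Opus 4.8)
The plan is to avoid re-running the casework and instead express $S_\mu(n)$ in terms of the count $\Gamma_{total}(n) = 2^{n-2}-(n-2)^2$ of all minimal superpatterns of length $n$ (up to isomorphism) derived just above, by sieving out the non-strict ones. The pivotal observation is that a minimal superpattern $\sigma$ of length $n$ fails to be \emph{strict} exactly when its final letter is unnecessary, i.e. when the prefix $\sigma(1)\cdots\sigma(n-1)$ is already a superpattern. Since deleting the last letter of an alternating word leaves an alternating word, such a prefix is automatically a minimal superpattern of length $n-1$. Thus the non-strict minimal superpatterns of length $n$ are precisely the one-letter extensions of the minimal superpatterns of length $n-1$.

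First I would make this correspondence into a clean bijection. Given a minimal superpattern $\tau$ of length $n-1$, appending a final letter $\ell$ with $\ell \neq \tau(n-1)$ yields a word of length $n$ that is still a superpattern (containment of subsequences is monotone under extension) and still minimal (the constraint $\ell\neq\tau(n-1)$ forbids an adjacent repeat); moreover its last letter is manifestly unneeded, so the extension is non-strict. Conversely, the prefix of any non-strict minimal superpattern recovers $\tau$ and its last position recovers $\ell$. Because the first two letters are held fixed at $i,j$ throughout the isomorphism-reduced enumeration and are untouched by appending, and because $\tau(n-1)$ admits exactly two admissible successors in the ternary alphabet (yielding two distinct words), the number of non-strict minimal superpatterns of length $n$ is $2\,\Gamma_{total}(n-1)$.

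Combining the two counts gives the recursion $S_\mu(n) = \Gamma_{total}(n) - 2\,\Gamma_{total}(n-1)$, after which the conclusion is pure arithmetic: substituting $\Gamma_{total}(n)=2^{n-2}-(n-2)^2$ and $\Gamma_{total}(n-1)=2^{n-3}-(n-3)^2$, the powers of two cancel and one obtains $S_\mu(n) = 2(n-3)^2 - (n-2)^2 = n^2 - 8n + 14 = (n-4)^2 - 2$. For the boundary value $n=7$ the recursion needs $\Gamma_{total}(6)=0$ (consistent with Lemma 3.2, i.e. no superpatterns of length six), which returns $S_\mu(7)=\Gamma_{total}(7)=7$; alternatively one appeals directly to Lemma 3.3, which exhibits exactly seven strict minimal superpatterns of length seven.

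The step I expect to demand the most care is verifying the bijection's well-definedness inside the "up to isomorphism" bookkeeping: one must confirm that non-strictness is \emph{equivalent} to "the length-$(n-1)$ prefix is itself a superpattern," that a one-letter extension never collapses two isomorphism classes nor creates an adjacent repeat, and that the multiplicity $2$ is uniform over all $\tau$ (which it is, since in a three-letter alphabet the last letter always has exactly two distinct legal successors). Once these points are secured, the identity $S_\mu(n)=\Gamma_{total}(n)-2\,\Gamma_{total}(n-1)$ and hence the closed form $(n-4)^2-2$ follow immediately.
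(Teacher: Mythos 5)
Your proposal is correct and follows essentially the same route as the paper: both identify the non-strict minimal superpatterns of length $n$ with two-fold extensions of minimal superpatterns of length $n-1$ and compute $S_\mu(n)=\Gamma_{total}(n)-2\,\Gamma_{total}(n-1)=(n-4)^2-2$. Your added care about the bijection's well-definedness and the $n=7$ boundary check are refinements of, not departures from, the paper's argument.
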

\begin{proof}Up to isomorphism, the number of strict minimal superpattern of length $n$ will equal the total number of minimal superpatterns of length $n$ minus any non-strict minimal superpatterns of length $n$. 
The total number of non-strict superpatterns of length $n$ is equal to the total number of minimal superpatterns of length $n-1$ times $2$, since the last letter is unnecessary in a non-strict superpattern for the completion of any preferential arrangement of $[3]^3$, making the word on the first $n-1$ letters a valid minimal superpattern of length $n-1$ and there are $2$ choices for the $n$th letter since no two adjacent letters in the word are the same letter. Therefore, 

\begin{eqnarray*} S_\mu(n) &=& [2^{n-2}-(n-2)^2]-2[2^{n-3}-(n-3)^2]\\ &=& (n-4)^2 - 2, \end{eqnarray*} 
as asserted.  The sequence generated by $S_\mu(n)$ existed as entry number $A008865$ in \cite{sloane}, but with little context.  We have added the above origin.\hfill\end{proof}

\begin{lem}
The number $S_a(n)$ of strict $n$-superpatterns in which there exist possible occurrences of adjacent repeated letters is given by $S_a(n)=\sum_{m=7}^{n}[(m-4)^2-2]{n-2 \choose m-2}$.
\end{lem}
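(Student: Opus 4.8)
The plan is to establish a length-graded bijection between strict $n$-superpatterns (now allowing adjacent repeats) and pairs consisting of a strict minimal superpattern of some length $m$ together with a ``stuttering'' of it to length $n$, and then to read off the count from Lemma 3.6. Given a word $w$ of length $n$, write $c(w)$ for its \emph{collapse}, the minimal word obtained by merging each maximal run of equal adjacent letters to a single letter; conversely, any word collapsing to a fixed minimal word $\sigma=s_1\cdots s_m$ is obtained by replacing $s_i$ with a run $s_i^{r_i}$, where $(r_1,\ldots,r_m)$ is a composition of $n$ into $m$ positive parts. The maps $w\mapsto (c(w),(r_1,\ldots,r_m))$ and its inverse are mutually inverse on all words, so the only issue is to cut this correspondence down to the strict superpatterns on one side and to pairs (strict minimal core, admissible composition) on the other.

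First I would treat the forward (inflation) direction. If $\sigma$ is a strict minimal superpattern of length $m$ and $w$ is any inflation of $\sigma$, then $\sigma$ is a subsequence of $w$, so $w$ is automatically a superpattern. For strictness I claim the last run must have length one: if $r_m\ge 2$, then deleting the final letter yields the inflation of $\sigma$ with composition $(r_1,\ldots,r_m-1)$, still a superpattern, so the last letter is unnecessary. Conversely, when $r_m=1$, deleting the final letter yields an inflation of $\rho:=s_1\cdots s_{m-1}$. The key observation is that the last letter of a strict minimal superpattern always completes a \emph{permutation} pattern (one of $123,132,213,231,312,321$): this is checked directly for the seven length-$7$ cores, and in general it follows from Lemma 3.5, since the embedded length-$7$ minimum superpattern ends at the last letter. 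Hence $\rho$ omits some permutation pattern $P$. Because a permutation pattern uses three distinct values, its three witnessing positions lie in three distinct runs, so $w$ contains $P$ if and only if $c(w)$ does; thus no inflation of $\rho$ can contain $P$, and $w$ with $r_m=1$ is strict.

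Next I would treat the backward (collapse) direction: given a strict superpattern $w$, I must show $u:=c(w)$ is again a \emph{strict minimal} superpattern. Minimality is automatic. Since permutation patterns are collapse-invariant, $u$ contains all six of them, whence by the minimum-length result $n(3,3)=7$ (Corollary 3.4, together with the fact that no word of length at most six realizes all six permutation patterns) the length of $u$ is $m\ge 7$. A minimal word of length $\ge 7$ over three letters has some letter occurring at least three times in distinct runs, so $u$ contains $111$; a short case analysis then recovers the remaining repeat-patterns $112,121,211,122,212,221$, so $u$ is a superpattern. Strictness of $u$ is then easy: the strictness of $w$ forces $r_m=1$ by the dichotomy above, so $u$ with its last letter removed equals $c(w')$, where $w'$ is $w$ with its last letter removed, and $c(w')$ is a subsequence of $w'$; were $c(w')$ a superpattern, so would be $w'$, contradicting the strictness of $w$.

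Finally I would do the counting. Fixing the isomorphism representative by its first two distinct letters (as in Lemma 3.6), each strict minimal core of length $m$ (there are $S_\mu(m)=(m-4)^2-2$ of them) is inflated to length $n$ by a composition with $r_m=1$ and $r_1+\cdots+r_{m-1}=n-1$, $r_i\ge 1$; there are $\binom{n-2}{m-2}$ such compositions. Summing over the admissible lengths $7\le m\le n$ gives
\[
S_a(n)=\sum_{m=7}^{n}\big[(m-4)^2-2\big]\binom{n-2}{m-2},
\]
as claimed. I expect the genuine obstacle to be the backward direction, specifically confirming that collapsing never destroys the property of being a superpattern: the permutation patterns are safe because they live in distinct runs, but the repeat-patterns $111,112,211,122,221$ can a priori be realized only inside a single run and hence be lost under collapse, and the crux is to show that the length forced by the six permutation patterns leaves enough room that these repeat-patterns are necessarily realized across distinct runs as well.
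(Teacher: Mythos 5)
Your decomposition is exactly the one the paper uses: every strict superpattern is an inflation of a strict minimal core of some length $m$, the last run must be a singleton, and the inflations are counted by compositions of $n-1$ into $m-1$ positive parts, giving $\binom{n-2}{m-2}$; the paper phrases the same count as inserting $n-m$ identical balls into the first $m-1$ run positions. So the route and the arithmetic agree with the paper's. The difference is that you attempt to certify that the inflation/collapse correspondence is genuinely a bijection between strict superpatterns and pairs (strict minimal core, admissible composition), which the paper merely asserts; in doing so you have correctly isolated the one nontrivial point, but you have neither closed it nor justified it correctly where you claim to.

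Concretely, the gap is the single fact underlying both of your directions: that the seven repeat patterns ($111,112,121,211,122,212,221$) can never be the \emph{only} obstruction, i.e., that a minimal word over $[3]$ of the relevant length containing all six permutation patterns automatically contains all thirteen preferential arrangements. In the forward direction you need the prefix $\rho=s_1\cdots s_{m-1}$ of a strict minimal core to miss a \emph{permutation} pattern (a missing repeat pattern is useless to you, since it can reappear inside a run of the inflation), and your appeal to Lemma 3.5 does not deliver this: the fact that the embedded length-$7$ minimum superpattern $\mu$ needs its final letter to complete, say, $321$ \emph{within} $\mu$ does not prevent the ambient word $\sigma$ from containing $321$ at positions avoiding its last letter; the assertion you need is about $\sigma$, not about $\mu$. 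In the backward direction you flag the same issue yourself, and the ``short case analysis'' recovering $112,121,\ldots,221$ from $111$ plus the six permutation patterns is precisely the missing lemma rather than a routine remark. Until that analysis is actually carried out (the statement does appear to be true, and is a finite check once one bounds the structure of a minimal three-letter word containing all of $S_3$), the collapse map has not been shown to land in strict minimal superpatterns and the count is not justified. It is only fair to note that the paper's own proof silently assumes the very same fact.
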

\begin{proof}Any strict superpattern of length $n$ in which there exist occurrences of two adjacent and repeated letters will contain an embedded occurrence of a strict minimal superpattern of length $m$, where $7 \leq m \le n$. Therefore all such superpatterns are found by inserting $n-m$ letters which cause two adjacent letters to be the same into strict minimal superpatterns of length $m$.

 These insertions can take place anywhere in the word except before the last letter since an occurrence of two adjacent letters as the same letter at the end of the word contradicts the strictness of the superpattern. 
Therefore there are $n-m$ insertions of identical ``balls" into $m-1$ possible positions and there are ${{m-1+(n-m)-1} \choose {m-2}}={{n-2}\choose{m-2}}$ ways to do this.   Since this insertion of the appropriate number of repeats can be done for all strict minimal superpatterns of length $m$, $7\le m\le n$, 
\begin{eqnarray*} S_a(n) &=&\sum_{m=7}^{n}S_\mu(m){n-2 \choose m-2}\\ &=&\sum_{m=7}^{n}[(m-4)^2-2]{n-2 \choose m-2}, \end{eqnarray*} which finishes the proof.\hfill\end{proof}  We now state the main result of this paper:
\begin{thm}
For all $n\geq 7$ the total number of strict superpatterns of length $n$ is given by $S(n)=6\sum_{m=7}^{n}[(m-4)^2-2]{n-2 \choose m-2},$ and thus 
the probability distribution of the waiting time $\tau$ for all preferential arrangements of $[3]^3$ to occur as a subsequence is 
$$\p(\tau=n)=p_{(3,n)} = \frac{6}{3^{n}}\sum_{m=7}^{n}[(m-4)^2-2]{n-2 \choose m-2}.$$
\end{thm}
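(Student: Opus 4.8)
The plan is to reduce the statement to the two preceding lemmas, since the heavy combinatorial lifting has already been done in computing $S_\mu$ and $S_a$; what remains is an event translation, an orbit-counting step, and an appeal to uniformity. First I would recast the event $\{\tau=n\}$ as a statement about words. Subsequence containment is monotone---appending a letter never destroys an embedded preferential arrangement---so the set of $t$ for which $X_1\cdots X_t$ is a superpattern is upward closed. Consequently $\{\tau=n\}$ holds exactly when $X_1\cdots X_n$ is a superpattern but $X_1\cdots X_{n-1}$ is not; by the Section~2 definition of a \emph{strict} superpattern (one whose last letter is required to complete some preferential arrangement), this is precisely the event that $X_1\cdots X_n$ is a strict superpattern of length $n$. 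For $n=7$ the condition on the length-$(n-1)$ prefix is automatic by Corollary~3.4.

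The core step is to convert the isomorphism-class count of Lemma~3.7 into the true word count $S(n)$. Lemma~3.7 gives the number $S_a(n)$ of strict superpatterns of length $n$ \emph{up to isomorphism}: the $m=n$ summand is exactly $S_\mu(n)$ from Lemma~3.6 (the minimal ones), while the summands with $m<n$ count the insertions of $n-m$ adjacent repeats. Thus $S_a(n)$ enumerates the orbits of the relabeling action of the symmetric group on $\{1,2,3\}$. I would then show each orbit has size exactly $3!=6$: a relabeling fixes a word only if it fixes every symbol occurring in it, and a superpattern contains the pattern $123$ and hence uses all three symbols, so the only relabeling fixing it is the identity. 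The stabilizer being trivial, every class splits into $6$ distinct words, giving $S(n)=6\,S_a(n)=6\sum_{m=7}^{n}[(m-4)^2-2]{n-2\choose m-2}$. This is the general form of the $n=7$ tally, where the seven classes produce $6\cdot7=42$ superpatterns.

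Finally I would use the i.i.d.\ uniform structure: each of the $3^n$ words of length $n$ arises as the prefix $X_1\cdots X_n$ with probability $3^{-n}$, and $\{\tau=n\}$ is the disjoint union over the $S(n)$ strict superpatterns of the corresponding singleton events. Hence $\p(\tau=n)=S(n)/3^n$, which is the stated distribution.

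I expect the only real obstacle to be the orbit-counting step, and specifically two points that must be nailed down: that the quantity $S_a(n)$ of Lemma~3.7 genuinely counts strict superpatterns up to the relabeling isomorphism (so that multiplication by the common orbit size is legitimate and causes no over- or undercounting), and that the $S_3$-action is free on superpatterns. The first is inherited from the insertion bijection underlying Lemma~3.7; the second is the trivial-stabilizer argument given above. The monotonicity translation of $\{\tau=n\}$ and the concluding uniform-probability computation are routine by comparison.
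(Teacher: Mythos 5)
Your proposal is correct and follows essentially the same route as the paper: the paper likewise obtains $S(n)=6\,S_a(n)$ from Lemma~3.7 by noting the six isomorphic relabelings of any superpattern, and then divides by $3^n$ using the correspondence between $\{\tau=n\}$ and strict superpatterns under the uniform measure. Your additions---the trivial-stabilizer justification for the factor $6$ (every superpattern contains $123$, hence uses all three letters) and the monotonicity argument identifying $\{\tau=n\}$ with the strict-superpattern event---merely make explicit what the paper asserts without proof.
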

\begin{proof}The first part of the proof follows immediately from Lemma 3.7 and the fact that there are 6 isomorphic arrangements for any superpattern.  The second part follows due to the immediate correspondence between a strict superpattern and the waiting time, and the fact that each of the $3^n$ sequences are equally likely.
This completes the proof.\hfill\end{proof}
Computation of moments is now routine.  We have
\begin{eqnarray*}{}&&\e(\tau)\\ \enspace&&= \sum_{n=7}^{\infty} \frac{6n}{3^{n}}\sum_{m=7}^{n}[(m-4)^2-2]{n-2 \choose m-2}\\ \enspace&&= 6 \sum_{m=7}^{\infty}( m^2 - 8m +14) \sum_{n=m}^{\infty}\frac{n{n-2 \choose m-2}}{3^{n}}\\ \enspace \enspace&&= 6 \sum_{m=7}^{\infty} (m^2 - 8m +14) \sum_{n=m}^{\infty}\frac{{n-2 \choose m-2}}{3^{n}}\\ \enspace&&+ 6 \sum_{m=7}^{\infty} (m^2 - 8m +14) \sum_{n=m}^{\infty}\frac{(n-1){n-2 \choose m-2}}{3^{n}}\end{eqnarray*}
\begin{eqnarray*} \enspace&&= \sum_{m=7}^{\infty}p_{(3,n)} + 6 \sum_{m=7}^{\infty} \frac{(m^2 - 8m +14)(m-1)}{3^m} \sum_{l=m-1}^{\infty}\frac{{l \choose m-1}}{3^{l-(m-1)}}\\ \enspace&&= 1 + 6 \sum_{m=7}^{\infty} \frac{(m^3 - 9m^2 + 22m -14)}{2^m} \\  \enspace&&= 13.5625,\end{eqnarray*}
and similar computations, not shown in detail, yield the generating function $G_3(t)$:

\begin{eqnarray*}
G_3(t) &=& \sum_{n=7}^{\infty} \frac{6t^n}{3^{n}}\sum_{m=7}^{n}[(m-4)^2-2]{n-2 \choose m-2}\\ &=& \frac{2t^7(16t^2-63t+63)}{(3-t)^5(3-2t)^3}.\end{eqnarray*}

\section{Open Questions}  The key questions we would like to see resolved are as follows:  (i) Can other methods, particularly generation function techniques \cite {wilf2} or the Markov chain embedding technique \cite {fu}, \cite{koutras} be used to give alternative proofs of our results and lead to generalizations for alphabets of size higher than 3?  One major complication to note is that a minimum superpattern for $[4]^4$ of length $12$ can be constructed using the construction method found in work by Burstein et al., but there exist strict superpatterns for $[4]^4$ of lengths larger than $12$ which do not contain one of the minimum superpatterns. One such example can be constructed using two copies of type A strict superpatterns for $[3]^3$ separated by a 4, i.e., $121312141213121$.  (ii) For $d\ge3$, can we obtain the exact distribution, in a not-too-complicated form, for the waiting time till all the {\it words} of length $k$ are obtained as subsequences?  NOTE:  This would be the waiting time for the completion of $k$ disjoint non-overlapping renewals of coupon collections with $d$ tokens; see \cite{omni}.

\section{Acknowledgments} The research of both AG and ML was supported by NSF Grant 0742364.  AG was further supported by NSF Grant 1004624.

\end{document}